\renewcommand{\(}{\left\(}
\renewcommand{\)}{\right\)}
\renewcommand{\[}{\left\[}
\renewcommand{\]}{\right\]}
\numberwithin{equation}{section}
\theoremstyle{plain}
\newtheorem{theorem}{Theorem}[section]
\newtheorem{remark}[]{Remark}
\newtheorem{conjecture}[theorem]{Conjecture}
\theoremstyle{definition}
\theoremstyle{remark}
\numberwithin{equation}{section}
\begin{document}
	\title[Discussion on certain conjectures regarding the periodicity of sign patterns]{Discussion on some conjectures regarding the periodicity of sign patterns of certain infinite products involving the Rogers-Ramanujan Continued Fractions} 
	
	\author{Suparno Ghoshal and Arijit Jana}
		\address{Department of Computer Science, Ruhr University Bochum, Germany}
	\email{suparno.ghoshal@rub.de}
	\address{Department of Mathematics, National Institute of Technology, Silchar, 788010, India}
	\email{jana94arijit@gmail.com}
	
	\thanks{2020 \textit{Mathematics Subject Classification.} 11P81, 11P83, 05A17.\\
		\textit{Keywords and phrases. Rogers-Ramanujan functions, periodicity}}
	
\begin{abstract}
	Let $R(q)$ denote the Rogers-Ramanujan continued fraction. Define
$$ \frac{1}{R^5(q)}=\displaystyle \sum_{n=0}^{\infty}A(n)q^{n} \quad  \text{and}   \quad R^5(q)=\displaystyle\sum_{n=0}^{\infty}B(n)q^{n}.$$
   Baruah and Sarma recently posed conjectures regarding the sign patterns of $A(5n), B(5n)$ for $n\geq 0.$  In this paper, we show that these conjectures do not hold for $n=0$.
\end{abstract}
	\maketitle
\section{Introduction and statement of results}
  For complex number $a$ and $q$,   the $q$- rising factorial is defined  by
\begin{align*}
(a;q)_0:=1,~&(a;q)_n:=\prod_{k=0}^{n-1}(1-aq^k) ~\textup{for}~ n\geq1, \\
&(a;q)_\infty:=\prod_{k=0}^{\infty}(1-aq^k) ~\textup{for}~ |q|<1.
\end{align*}
Throughout the paper, we set $f_k:=(q^k;q^k)_\infty$. 
The renowned Rogers-Ramanujan continued fraction is defined by 
$$\mathcal{R}(q):=\dfrac{q^{1/5}}{1}_{+}\dfrac{q}{1}_{+}\dfrac{q^2}{1}_{+}\dfrac{q^3}{1}_{+~\cdots},\quad|q|<1.$$
Set $$R(q):=q^{-1/5}\mathcal{R}(q):=\dfrac{1}{1}_{+}\dfrac{q}{1}_{+}\dfrac{q^2}{1}_{+}\dfrac{q^3}{1}_{+~\cdots}.$$

The Rogers-Ramanujan identities are given by
\begin{align*}
G(q)=\sum_{n=0}^{\infty}\frac{q^{n^2}}{(q;q)_n}=\frac{1}{(q;q^5)_{\infty}(q^4;q^5)_{\infty}}\\\intertext{and}
H(q)=\sum_{n=0}^{\infty}\frac{q^{n^2+n}}{(q;q)_n}=\frac{1}{(q^2;q^5)_{\infty}(q^3;q^5)_{\infty}},
\end{align*}
In $1894$, Rogers \cite{rogers1894} proved that  
\begin{align}\label{R-GH}
R(q)&=\dfrac{H(q)}{G(q)}=\dfrac{(q;q^5)_\infty(q^4;q^5)_\infty}{(q^2;q^5)_\infty(q^3;q^5)_\infty}.
\end{align}
The above identity was also given by  Ramanujan \cite{nb} (see \cite[Corollary, p. 30]{bcb3}). 
The power series coefficients of the infinite products given in \eqref{R-GH} and their reciprocal were asymptotically studied by Richard and Szekeres \cite[Eq. (3.9)]{richmond1978taylor} in $1978$.
 In particular, they  proved that, if 
 \begin{align}\label{eq1}
 R(q) =:\sum_{n=0}^{\infty} d(n)q^n =\dfrac{(q;q^5)_\infty(q^4;q^5)_\infty}{(q^2;q^5)_\infty(q^3;q^5)_\infty},
 \end{align}
 then for $n$ sufficiently large,
 \begin{align}\label{thm:rich-2} 
 &d(5n)>0,~d(5n+1)<0,~ d(5n+2)>0,~d(5n+3)<0,~ \text{and}~d(5n+4)<0. 
 \end{align}
 Also, if
\begin{align}\label{eq2}
\frac{1}{R(q)} =:\sum_{n=0}^{\infty} c(n)q^n =\dfrac{(q^2;q^5)_\infty(q^3;q^5)_\infty}{(q;q^5)_\infty(q^4;q^5)_\infty},	
\end{align}
then
\begin{align*}
c(n)=\dfrac{\sqrt{2}}{(5n)^{3/4}}\textup{exp}\left(\dfrac{4\pi}{25}\sqrt{5n}\right)\times\left\{\cos\left(\dfrac{2\pi}{5}\left(n-\dfrac{2}{5}\right)\right)+\mathcal{O}(n^{-1/2})\right\},
\end{align*}

which implies that, for $n$ sufficiently large,
\begin{align}\label{thm:rich}
c(5n)>0, ~c(5n+1)>0, ~c(5n+2)<0,~c(5n+3)<0,~\text{and}~ c(5n+4)<0.   
\end{align}

In his lost notebook \cite[p. 50]{lnb}, Ramanujan recorded formulas for the generating functions
\begin{align*}
\sum_{n=0}^\infty c(5n+j)q^n \quad \text{and} \quad \sum_{n=0}^\infty d(5n+j)q^n, \quad \text{for}\quad 0\leq j\leq 4
\end{align*}
These identities were later proved by Andrews \cite{andrews1981ramunujan}. Using these formulas along with a theorem of Gordon \cite{gordon}, Andrews provided partition-theoretic interpretations for the coefficients $c(n), d(n)$ and consequently established that \eqref{thm:rich} and \eqref{thm:rich-2} hold for all $n$, except in the cases
$c(2)=c(4)=c(9)=0$, $d(3)=d(8)=0$. Building on this work, Hirschhorn \cite{hirschhorn} applied the quintuple product identity \cite{cooper-qtpi} to derive exact  $q$-product representations for the same generating functions. His analysis revealed a periodic pattern in the signs of the coefficients
$c(n)$ and $d(n)$, with two additional exceptions: $d(13)=d(23)=0$.

Ramanujan noted the following sophisticated identity in his second notebook \cite[p. 289]{nb} and the lost notebook \cite[p. 365]{lnb}. 
\begin{align}\label{R_1^5/R_5}	R^5(q)=R(q^5)\cdot\frac{1-2qR(q^5)+4q^2R^2(q^5)-3q^3R^3(q^5)+q^4R^4(q^5)}{1+3qR(q^5)+4q^2R^2(q^5)+2q^3R^3(q^5)+q^4R^4(q^5)}.
\end{align}
His first letter to Hardy, dated January $16, 1913$, also contains the identity.
  For the proofs of the above identity, one can see the papers by Rogers \cite{rogers1921}, Watson \cite{watson}, Ramanathan \cite{ramanathan-acta}, Yi \cite{yi}, and Gugg \cite{gugg-rama}.

Recently, Baruah and Sarmah \cite{baruah-sarma} investigate the behavior of the signs of the coefficients of the infinite products  $R^5(q)$, $\frac{1}{R^5(q)}$, $\frac{R^5(q)}{R(q^5)}$, and $\frac{R(q^5)}{R^5(q)}$. They mainly give the following results.
\begin{theorem}\cite[Theorem 2]{baruah-sarma}\label{periodR}
	 If $A(n)$ is defined by 
	\begin{align*}
	\dfrac{1}{R^5(q)}&=\sum_{n=0}^{\infty}A(n)q^{n},
	\end{align*}then for all nonnegative integers $n$, we have
	\begin{align*}
	A(5n+1)>0, 
	A(5n+2)>0,
	A(5n+3)>0,
	A(5n+4)<0.
	\end{align*}
\end{theorem}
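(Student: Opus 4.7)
The strategy is to derive explicit expressions for the four generating functions $\sum_{n \ge 0} A(5n+j) q^n$, $j \in \{1,2,3,4\}$, as quotients of standard $q$-products $f_k$, and then to read off the signs of the coefficients. The starting point is Ramanujan's quintic identity \eqref{R_1^5/R_5}, which writes $R^5(q)$ as a rational function in $qR(q^5)$, hence (after taking the reciprocal) expresses $\tfrac{1}{R^5(q)}$ as another rational function in $qR(q^5)$:
\[
\frac{1}{R^5(q)} = \frac{1}{R(q^5)} \cdot \frac{1 + 3qR(q^5) + 4q^2R^2(q^5) + 2q^3R^3(q^5) + q^4R^4(q^5)}{1 - 2qR(q^5) + 4q^2R^2(q^5) - 3q^3R^3(q^5) + q^4R^4(q^5)}.
\]
Since $R(q^5)$ is a power series in $q^5$, the five residue classes mod $5$ can in principle be isolated from this expression by partial-fraction decomposition in the variable $qR(q^5)$ together with comparison of powers of $q$ modulo $5$.

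Concretely, I would combine \eqref{R_1^5/R_5} with the classical Watson-type identity relating $R^5(q)$, $1/R^5(q)$, and $f_1^6/f_5^6$, which converts the 5-dissection problem for $\tfrac{1}{R^5(q)}$ into a combination of the 5-dissection of $R^5(q)$ (immediate from \eqref{R_1^5/R_5}) and that of the theta quotient $f_1^6/f_5^6$. The dissection of the latter is standard and can be read off via Hirschhorn's framework, i.e.\ Ramanujan's 5-dissections of $f_1$ and its small powers together with the quintuple product identity. Assembling the pieces produces, for each $j \in \{1,2,3,4\}$, a closed form for $\sum_{n \ge 0} A(5n+j) q^n$ as an explicit infinite $q$-product (possibly multiplied by a short $q$-polynomial factor).

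For each such closed form, the sign of every coefficient should emerge either directly as the sign of a single infinite product (whose coefficients have known sign by Jacobi's triple product) or by comparison with the established sign patterns \eqref{thm:rich-2} and \eqref{thm:rich} for $c(n)$ and $d(n)$, using the Rogers--Ramanujan identities \eqref{R-GH}. Positivity for $j = 1, 2, 3$ and negativity for $j = 4$ would then follow term-by-term for all $n \ge 0$ in each residue class.

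The main obstacle is the algebraic bookkeeping needed to collapse the four-term partial-fraction decomposition into a clean signed product; this is a classical but intricate calculation involving heavy manipulation of Ramanujan's theta functions and repeated use of the quintuple product identity. A secondary but genuine concern is small-$n$ behaviour: the analogous series $c(n), d(n)$ exhibit the exceptional zeros $c(2) = c(4) = c(9) = 0$ and $d(3) = d(8) = d(13) = d(23) = 0$, warning that the naive sign obtained from the product form can fail at finitely many initial coefficients. A finite numerical verification of $A(j), A(5+j), A(10+j), \ldots$ up to a safe threshold therefore needs to be appended to close the argument for every $n \geq 0$ rather than just for $n$ sufficiently large.
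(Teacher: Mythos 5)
First, a point of orientation: the paper you are working from does not prove this statement at all --- it is quoted verbatim from Baruah and Sarma, and the present paper merely reuses the dissection identities from that source in its own (different) theorems. So the relevant comparison is with Baruah--Sarma's method, which is indeed the one you outline: start from Ramanujan's quintic identity expressing $R^5(q)$ in terms of $qR(q^5)$, take reciprocals, rationalize, $5$-dissect, and simplify each residue class using \eqref{B20} together with $\frac{1}{R(q^5)}-q-q^2R(q^5)=f_1/f_{25}$. Your outline is pointed in the right direction and matches the known route.

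As a proof, however, it has a genuine gap, and it sits exactly in the step you defer as ``algebraic bookkeeping.'' The theorem is not the dissection; the theorem is the sign determination, and nothing in your sketch establishes that the four dissected series $\sum_{n\ge 0}A(5n+j)q^n$ for $j=1,2,3,4$ actually come out as manifestly one-signed objects. That this cannot be taken for granted is the entire point of the paper at hand: for the remaining residue class the very same machinery yields $\sum_{n\ge 0}A(5n)q^n=\frac{1}{R(q)}\bigl(1-25q\,f_5^6/f_1^6\bigr)$ as in \eqref{eq3}, a product of two mixed-sign series, and the conjectured uniform negativity of $A(5n)$ in fact fails at $n=0$. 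So ``read off the signs'' is precisely where such an argument can break, and you must exhibit the four closed forms explicitly and argue each coefficient-by-coefficient, e.g.\ as products of series whose signs are controlled residue class by residue class via \eqref{thm:rich} and \eqref{thm:rich-2}, with the exceptional zeros $c(2)=c(4)=c(9)=0$ and $d(3)=d(8)=d(13)=d(23)=0$ handled one at a time. Finally, your fallback of ``a finite numerical verification up to a safe threshold'' does not close the argument as stated: a finite check suffices only if you possess an effective, explicit bound beyond which the asymptotic sign pattern is guaranteed, and neither the Richmond--Szekeres asymptotics as quoted nor your sketch supplies such a bound.
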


\begin{theorem}\cite[Theorem 3]{baruah-sarma}\label{periodalpha}
	 If $B(n)$ is defined by 
	\begin{align*}
	R^5(q)&=\sum_{n=0}^{\infty}B(n)q^{n},
	\end{align*}then for all nonnegative integers $n$, we have
	\begin{align*}
	B(5n+1)<0,
	B(5n+2)>0,
	B(5n+3)<0,
	B(5n+4)>0.
	\end{align*}
\end{theorem}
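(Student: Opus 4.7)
My plan is to extract the 5-dissection of $R^5(q)$ from Ramanujan's identity \eqref{R_1^5/R_5} and then verify the sign pattern for each dissection component. Writing $R^5(q) = \sum_{j=0}^{4} q^j G_j(q^5)$ with $G_j(q^5) = \sum_{n\geq 0} B(5n+j)\,q^{5n}$, the goal is to show that $-G_1,\,G_2,\,-G_3,\,G_4$ all have nonnegative power series coefficients.

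The first step is to rationalize the right-hand side of \eqref{R_1^5/R_5} so that its denominator becomes a $q^5$-series. Setting $x := qR(q^5)$ and $D(x) := 1 + 3x + 4x^2 + 2x^3 + x^4$, the product $\prod_{k=0}^{4} D(\zeta^k x)$ with $\zeta = e^{2\pi i/5}$ is invariant under $x \mapsto \zeta x$, hence is a polynomial in $x^5 = q^5 R^5(q^5)$ and so a series in $q^5$ alone. Multiplying numerator and denominator of the fraction in \eqref{R_1^5/R_5} by $\prod_{k=1}^{4} D(\zeta^k x)$ therefore produces a representation of $R^5(q)$ as a polynomial in $q$ of controlled degree with $q^5$-series coefficients, divided by a $q^5$-series. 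Collecting the $q$-exponents of the numerator by residue class modulo $5$, and converting $R(q^5)$ to infinite products via \eqref{R-GH}, would yield explicit product-type formulas for each $G_j$.

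The second step is the sign analysis. For each $j \in \{1,2,3,4\}$, I would attempt to recast the appropriately-signed $G_j$ as a product of the form $\frac{f_a^{r_a}}{f_b^{r_b}}\,P(q^5)$, where the quotient of $q$-products has manifestly nonnegative coefficients (as one obtains from Rogers-Ramanujan-type identities) and $P(q^5)$ is a manifestly nonnegative finite polynomial or series. This mirrors the strategy Hirschhorn employs for the coefficients $c(n)$ and $d(n)$.

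The main obstacle is precisely this positivity step. Generic infinite products of the form $\prod_n(1 \pm q^n)^{\pm 1}$ do not have sign-uniform coefficients, so establishing this property for each $G_j$ requires either a partition-theoretic interpretation of $B(5n+j)$ (in the spirit of Andrews' use of Gordon's theorem for $c(n)$ and $d(n)$) or a delicate algebraic rearrangement in which positivity of every factor is transparent. Furthermore, since the analogous sign patterns for $R(q)$ and $1/R(q)$ admit the exceptions $c(2)=c(4)=c(9)=0$ and $d(3)=d(8)=0$, one must also verify by direct computation for small $n$ that no such exceptions arise for $B(5n+j)$ with $j \in \{1,2,3,4\}$.
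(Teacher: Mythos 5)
First, a point of orientation: the paper you are working from does not prove this statement at all. It is Theorem 3 of Baruah and Sarma, imported verbatim as Theorem \ref{periodalpha} and used as an ingredient elsewhere, so there is no internal proof to compare against; your attempt has to be judged as a reconstruction of the cited proof.

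Your first step is sound and is essentially the route taken in the literature. Rationalizing \eqref{R_1^5/R_5} can be done more cleanly than with your $\prod_{k=1}^{4}D(\zeta^{k}x)$ device: writing $N(x)=1-2x+4x^{2}-3x^{3}+x^{4}$ and $D(x)=1+3x+4x^{2}+2x^{3}+x^{4}$, one has the algebraic identity $N(x)D(x)=x^{4}\cdot\dfrac{x^{-5}-11-x^{5}}{x^{-1}-1-x}$, and combining this with \eqref{B20} and the companion identity $\dfrac{1}{R(q^{5})}-q-q^{2}R(q^{5})=\dfrac{f_{1}}{f_{25}}$ converts the denominator into an eta-quotient in $q^{5}$ directly; this is exactly how one arrives at the expression for $\sum_{n\geq0}B(n)q^{n}$ quoted as equation (69) of Baruah--Sarma in the proof of Theorem \ref{gj2}. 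The genuine gap is your second step. You state that you ``would attempt'' to recast each signed dissection component as a manifestly nonnegative product, and you correctly identify this as the main obstacle --- but you do not carry it out, and nothing in your write-up shows that it can be carried out. The entire content of the theorem lives there: after the dissection one must actually exhibit $-\sum_{n\geq0}B(5n+1)q^{n}$, $\sum_{n\geq0}B(5n+2)q^{n}$, $-\sum_{n\geq0}B(5n+3)q^{n}$ and $\sum_{n\geq0}B(5n+4)q^{n}$ as explicit combinations of eta-quotients, or of $R(q)$, $1/R(q)$ and $1-25q\,f_{5}^{6}/f_{1}^{6}$ whose coefficient signs are controlled by \eqref{thm:rich-2}, \eqref{thm:rich} and Hirschhorn's refinements, in a form from which \emph{strict} sign-definiteness can be read off; one must also rule out sporadic vanishing of the kind that occurs for $c(2)$, $c(4)$, $c(9)$, $d(3)$, $d(8)$, $d(13)$, $d(23)$. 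As it stands, your proposal is a correct plan of attack with the decisive step left as a declared intention, so it does not constitute a proof.
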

\begin{theorem} \cite[Theorem 4]{baruah-sarma}\label{periodbeta}
	If $C(n)$ is defined by 
	\begin{align*}
	\dfrac{R^5(q)}{R(q^5)}&=\sum_{n=0}^{\infty}C(n)q^{n},
	\end{align*}then for all nonnegative integers $n$, we have
	\begin{align*}
	C(5n)<0, 
	C(5n+1)<0, 
	C(5n+2)>0, 
	C(5n+3)<0, 
	C(5n+4)>0,
	\end{align*}
	except $C(0)=1$.
\end{theorem}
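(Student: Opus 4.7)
My plan is to leverage Ramanujan's identity \eqref{R_1^5/R_5} as the algebraic backbone. Setting $u := qR(q^5)$ and dividing both sides of \eqref{R_1^5/R_5} by $R(q^5)$ yields
\begin{equation*}
\frac{R^5(q)}{R(q^5)} = \frac{1-2u+4u^2-3u^3+u^4}{1+3u+4u^2+2u^3+u^4}.
\end{equation*}
The first thing I would notice is that the numerator and denominator differ by exactly $5u(1+u^2)$, giving the clean decomposition
\begin{equation*}
\frac{R^5(q)}{R(q^5)} = 1 - 5\cdot\frac{u+u^3}{D(u)}, \qquad D(u):=1+3u+4u^2+2u^3+u^4.
\end{equation*}
At $q=0$ we have $u=0$, so the right-hand side equals $1$, immediately explaining the exceptional case $C(0)=1$. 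For all $n\geq 1$, the problem thus reduces to determining the sign of the coefficient of $q^n$ in $(u+u^3)/D(u)$.

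Next I would expand $(u+u^3)/D(u) = \sum_{k\geq 1}\gamma_k u^k$, where the integers $\gamma_k$ satisfy a four-term recurrence coming from $D(u)\sum_k\gamma_k u^k = u+u^3$. Substituting $u = qR(q^5)$ and exploiting the fact that $q^k R^k(q^5)$ is supported on exponents $\equiv k\pmod 5$, I would dissect by residue class to obtain
\begin{equation*}
C(5m+j) = -5\sum_{l=0}^{m}\gamma_{5l+j}\,[q^{m-l}]R^{5l+j}(q), \qquad 0\leq j\leq 4,\ (m,j)\neq(0,0).
\end{equation*}
This reduces the theorem to five separate statements, one per residue class $j$, about the sign of a finite sum.

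The main obstacle will be this sign analysis. Direct computation of $\gamma_1,\gamma_2,\ldots$ yields $1,-3,6,-8,5,8,-34,\ldots$, which is not eventually sign-periodic, so the desired pattern for $C(5m+j)$ must emerge only through cancellation among the summands. I would attack this on two fronts. First, I would develop asymptotics for $[q^{m-l}]R^{5l+j}(q)$ in the spirit of Richmond--Szekeres \eqref{thm:rich}--\eqref{thm:rich-2}, aiming to show that a single term in the above sum dominates for large $m$ and pins down the sign, while handling the finitely many small $m$ computationally using the infinite-product form of $R(q)$. Second, and more in the spirit of Hirschhorn \cite{hirschhorn}, I would multiply numerator and denominator of $(u+u^3)/D(u)$ by $\prod_{\omega^5=1,\,\omega\neq 1}D(\omega u)$, the product over non-trivial fifth roots of unity, forcing the denominator to become a polynomial in $u^5 = q^5R^5(q^5)$ and hence a power series in $q^5$; the numerator then becomes a polynomial in $u$ of degree at most $19$, and regrouping by residue class modulo $5$ gives an explicit $5$-dissection of $C$ expressible through $G(q^5)$ and $H(q^5)$ via \eqref{R-GH}, from which the infinite-product representation should yield the signs directly.
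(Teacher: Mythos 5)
First, a point of orientation: the paper does not prove this statement at all --- it is imported verbatim from Baruah and Sarma \cite{baruah-sarma} as known input --- so your proposal has to stand on its own. Its algebraic skeleton is correct. With $u=qR(q^5)$ one indeed has $D(u)-N(u)=5u+5u^3=5u(1+u^2)$ for $N(u)=1-2u+4u^2-3u^3+u^4$, hence $R^5(q)/R(q^5)=1-5(u+u^3)/D(u)$, which explains $C(0)=1$; since $u^k=q^kR^k(q^5)$ is supported on exponents $\equiv k\pmod 5$, your dissection formula for $C(5m+j)$ is also right (modulo the small indexing slip that for $j=0$ the sum must start at $l=1$, as $\gamma_0$ does not exist). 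Your computed $\gamma_k$ are correct; as a consistency check, $\gamma_5=5$ gives $C(5)=-5\gamma_5=-25$, which agrees with the identity $\sum_{n\ge 0}C(5n)q^n=1-25q\,f_5^6/f_1^6$ used in Section 2 of this paper.

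The genuine gap is that the sign determination --- the entire content of the theorem --- is never executed, and neither proposed route closes it as described. Route one (asymptotics plus a finite check) founders on effectivity: Richmond--Szekeres supply an error term $\mathcal{O}(n^{-1/2})$ with no explicit constant, and because the $\gamma_k$ grow and change sign irregularly ($1,-3,6,-8,5,8,-34,\dots$), you would need uniform \emph{effective} bounds on $[q^{m-l}]R^{5l+j}(q)$ over all $0\le l\le m$ before "finitely many small $m$" becomes a checkable set. Route two (rationalizing by $\prod_{\omega^5=1,\,\omega\ne 1}D(\omega u)$) is the right idea and is essentially the Hirschhorn/Baruah--Sarma mechanism, but the sentence "the infinite-product representation should yield the signs directly" is precisely where all the work lives: one must evaluate $\prod_{\omega}D(\omega u)$ as an eta-quotient via identities such as $1/R^5(q)-q^2R^5(q)=11q+f_1^6/f_5^6$, carry out the degree-$19$ numerator computation, regroup into five pieces, and then verify, class by class, that each resulting combination of products has one-signed coefficients --- a property that is not automatic from a product representation and must be argued (e.g., via nonnegativity of quotients like $f_5^6/f_1^6$) in each case. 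Until that is done for all five residue classes, you have a correct reduction and a plausible plan, not a proof.
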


\begin{theorem}\cite[Theorem 5]{baruah-sarma}\label{periodgamma}
	If $D(n)$ is defined by 
	\begin{align*}
	\dfrac{R(q^5)}{R^5(q)}&=\sum_{n=0}^{\infty}D(n)q^{n},
	\end{align*}then for all nonnegative integers $n$, we have
	\begin{align*}
	D(5n)<0, 
	D(5n+2)>0, 
	D(5n+3)>0, 
	D(5n+4)<0, 
	\end{align*}
	except $D(0)=1$.
\end{theorem}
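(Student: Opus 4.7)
The plan is to produce an explicit $5$-dissection of $R(q^5)/R^5(q)$ and then read the sign of each residue class off the resulting product expansion. The starting point is Ramanujan's identity \eqref{R_1^5/R_5}, inverted to
\begin{align*}
\frac{R(q^5)}{R^5(q)}=\frac{1+3qR(q^5)+4q^2R^2(q^5)+2q^3R^3(q^5)+q^4R^4(q^5)}{1-2qR(q^5)+4q^2R^2(q^5)-3q^3R^3(q^5)+q^4R^4(q^5)}.
\end{align*}
The numerator is already a $5$-dissection modulo $5$, since each term $q^jR^j(q^5)$ sits exclusively in residue class $j\pmod{5}$; the entire difficulty is concentrated in the denominator.

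I would dispose of the denominator by invoking the Watson--Ramanujan modular equation
\begin{align*}
\frac{1}{R^5(q)}-11-R^5(q)=\frac{f_1^{6}}{q\,f_5^{6}},
\end{align*}
and multiplying through by $R(q^5)$ to obtain the split
\begin{align*}
\frac{R(q^5)}{R^5(q)}=11\,R(q^5)+R(q^5)R^5(q)+\frac{R(q^5)\,f_1^{6}}{q\,f_5^{6}}.
\end{align*}
The first summand lives only in residue class $0\pmod{5}$ and contributes $11\,d(n)$ to $D(5n)$. The second is a convolution whose contribution to class $j\neq 0$ is $\sum_m d(m)\,B(5(n-m)+j)$, and the signs of $B(5k+j)$ for $j\in\{1,2,3,4\}$ are supplied by Theorem \ref{periodalpha}. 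The third is the principal term: tensoring the classical $5$-dissection of $f_1^{6}/f_5^{6}$ (the one underlying Ramanujan's partition congruence modulo $5$) against $R(q^5)/q$, which lives in class $4\pmod{5}$, produces an explicit product expansion of its contribution to each residue class.

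Assembling the three pieces, one expects each $\sum_n D(5n+j)q^n$ for $j\in\{2,3,4\}$ to collapse to a short signed sum of eta-quotients whose sign-definiteness is manifest from the product form, yielding the predicted signs directly. The residue class $j=0$ requires a more delicate cancellation: the constant $+11$ from $11R(q^5)$, the boundary value $B(0)=1$ from $R(q^5)R^5(q)$, and the $q^{-1}$ leading behaviour of $R(q^5)f_1^{6}/(qf_5^{6})$ must conspire to give exactly $+1$ at $n=0$ and a sign-definite negative tail for $n\geq 1$. The silence of the theorem statement at $j=1$ is consistent with this strategy: in that class the dissection is expected to produce a genuine difference of two eta-quotients with incompatible signs, leaving no uniform prediction to make.

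The main obstacle will be verifying the sign-definiteness of the dissected eta-quotients in each of the four named residue classes, and in particular handling the exceptional value $D(0)=1$ in class $j=0$ without breaking the negative tail. For classes $\{2,3,4\}$ the machinery of positive theta expansions should settle the matter once the dissection has collapsed to a single product; the $j=0$ case, where three terms must cancel at $n=0$ to $+1$ and then recombine to a negative series thereafter, is likely where the bulk of the technical work lives.
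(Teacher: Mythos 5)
This theorem is not actually proved in the paper: it is imported wholesale from Baruah and Sarma as \cite[Theorem 5]{baruah-sarma}, and the only piece of their machinery visible here is their Equation (79), reproduced in the proof of Theorem \ref{gj3}, which writes $\sum_n D(n)q^n$ as a single eta-quotient times the square of an already $5$-dissected polynomial in $qR(q^5)$ times the classical dissection $f_1=f_{25}\bigl(R(q^5)^{-1}-q-q^2R(q^5)\bigr)$; each residue class then collapses to one explicit product whose sign is read off directly. Your plan starts from the same inversion of Ramanujan's identity but then takes a different branch, and that branch has genuine gaps.

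Concretely: (i) your form of the Watson--Ramanujan equation uses the normalization $\mathcal{R}(q)=q^{1/5}R(q)$; in the paper's normalization it is \eqref{B20}, so the correct split is $R(q^5)/R^5(q)=11qR(q^5)+q^2R(q^5)R^5(q)+R(q^5)f_1^6/f_5^6$, the first summand lives in class $1\pmod 5$ rather than class $0$, and your residue-class bookkeeping is shifted throughout. (ii) The middle summand contributes $\sum_m d(m)B(5(n-m)+j-2)$ to class $j$ (indices read mod $5$), so for $j=2$ --- where you must prove $D(5n+2)>0$ --- you need the signs of $B(5k)$: exactly the coefficients governed only by Conjecture \ref{conj 1}, which Theorem \ref{gj2} of this paper shows change sign at $k=0$. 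The plan is circular at the class it most needs. (iii) Even where the signs of $B(5k+r)$ are known, the convolution is against $d(m)$, whose sign depends on $m\bmod 5$ by \eqref{thm:rich-2}; the products $d(m)B(\cdot)$ therefore have mixed signs and the sign of their sum is not determined --- the same objection applies to the term you describe as contributing ``$11\,d(n)$''. (iv) The $5$-dissection of $f_1^6/f_5^6$ is not the classical one; what is classical is the dissection of $f_1$ itself, and its sixth power scatters mixed-sign cross terms into every residue class. Since establishing sign-definiteness is the entire content of the theorem and none of the three summands delivers it, the proposal is a research plan whose central decomposition cannot be pushed through, rather than a proof.
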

In Theorems \ref{periodR}, \ref{periodalpha}, and \ref{periodgamma}, Baruah and Sarma gave the sign patterns of the coefficients $A(n)$, $B(n)$, and $D(n)$ of $1/R^5(q)$, $R^5(q)$, and $R(q^5)/R^5(q)$, respectively, except $A(5n)$, $B(5n)$, and $D(5n+1)$.  In the same paper, they posed the following conjecture based on numerical observation.

\begin{conjecture} \cite[Conjecture 13]{baruah-sarma}\label{conj 1}
	For all integers $n\geq0$, 
	\begin{align*}
	A(5n)&<0, \\
	B (5n)&<0,\\
	D(5n+1)&>0.		
	\end{align*}
\end{conjecture}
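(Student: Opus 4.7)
The plan is to follow the dissection approach of Hirschhorn and Andrews that yielded Theorems \ref{periodR}--\ref{periodgamma}: derive closed-form expressions for $\sum_{n\ge 0} A(5n)q^{5n}$, $\sum_{n\ge 0} B(5n)q^{5n}$, and $\sum_{n\ge 0} D(5n+1)q^{5n+1}$, and then read off the sign of each coefficient from those expressions together with the sign patterns \eqref{thm:rich} and \eqref{thm:rich-2} already established for $c(n)$ and $d(n)$.

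The natural starting point is Ramanujan's identity \eqref{R_1^5/R_5}, which writes $R^5(q)$ as a rational function in $r := R(q^5)$ whose numerator and denominator are polynomials with $q$-coefficients supported only on $\{1,q,q^2,q^3,q^4\}$. Inverting that identity gives an analogous expression for $1/R^5(q)$, and multiplying the latter by $R(q^5)$ yields one for $R(q^5)/R^5(q)$. In each case, a partial-fraction decomposition in $r$, combined with the algebraic relation satisfied by $r$ itself, should split the series into a sum of five pieces of the form $q^j\cdot F_j(q^5)$ for $j\in\{0,1,2,3,4\}$, each $F_j$ a product of powers of $R(q^5)$ with the infinite products $f_k$. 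Keeping the piece with the desired $j$ isolates the arithmetic progression in question.

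With those closed forms in hand, the sign of each relevant coefficient should follow by combining the sign patterns of $c$ and $d$ on the appropriate subsequences with the positivity of the infinite products $f_k$. The main technical task is to verify that, after the dissection, one term dominates the sign of the subprogression so that the conclusion holds without cancellation.

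The hard part, I expect, is the boundary case $n=0$. From the continued-fraction definition one has $R(0)=1$, so both $R^5(q)$ and $1/R^5(q)$ have constant term $1$; in particular $A(0)=B(0)=1>0$, the opposite of what the conjecture predicts for those subprogressions. The dissection above will therefore expose an unavoidable $+1$ contribution to the constant term of $\sum_n A(5n)q^{5n}$ and $\sum_n B(5n)q^{5n}$ that cannot be absorbed into a negative-definite tail, so the $A$ and $B$ parts of the conjecture as stated must fail at $n=0$. The natural corrected claim is $A(5n)<0$ and $B(5n)<0$ only for $n\ge 1$, and that weaker statement should follow from the dissection together with the sign patterns for $c$ and $d$. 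The $D(5n+1)>0$ part, by contrast, is consistent at $n=0$ (one computes $D(1)=5$), so it could in principle hold for all $n\ge 0$ by the same method.
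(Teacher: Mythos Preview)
Your proposal is correct in its central observation: since $R(0)=1$, one has $A(0)=B(0)=1>0$, so the first two parts of the conjecture fail at $n=0$. This is exactly the conclusion the paper reaches, and your direct computation of the constant terms is in fact more elementary than the route taken there. The paper instead quotes the dissection formulas already obtained by Baruah and Sarma (their Equations (62), (69), (79)), extracts the $q^{5n}$ (or $q^{5n+1}$) pieces, and rewrites the resulting generating functions as Cauchy products, namely $\sum A(5n)q^n=\bigl(\sum c(n)q^n\bigr)\bigl(\sum C(5n)q^n\bigr)$, $\sum B(5n)q^n=\bigl(\sum d(n)q^n\bigr)\bigl(\sum C(5n)q^n\bigr)$, and $\sum D(5n+1)q^n=\bigl(\sum d(n)q^n\bigr)\bigl(\sum A(5n+1)q^n\bigr)$; the signs of a few low-index coefficients are then read off from the known sign patterns of $c,d,C,A$. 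Your plan to derive such dissections from \eqref{R_1^5/R_5} via partial fractions is essentially the same machinery, just carried out from scratch rather than cited. One point of difference: you go further and propose proving the corrected inequalities $A(5n)<0$ and $B(5n)<0$ for all $n\ge 1$; the paper does not attempt this and only verifies a handful of small cases ($A(10),A(15),B(5)$), so your hoped-for ``one term dominates'' argument is not supplied there and would require additional work.
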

Our main results are stated below.
\begin{theorem}\label{gj1}

    \begin{align*}
       & A(0) >0, A(10)<0, A(15) < 0. \
       \end{align*}
    
\end{theorem}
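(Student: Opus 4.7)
The plan is to verify the three claims by a direct computation of the first few Taylor coefficients of $1/R^5(q)$; all the conceptual novelty is concentrated in the first coefficient.

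\emph{Step 1 (the value $A(0)$).} From the product representation $R(q)=\frac{(q;q^5)_\infty(q^4;q^5)_\infty}{(q^2;q^5)_\infty(q^3;q^5)_\infty}$ we read off $R(0)=1$, so $1/R^5(0)=1$ and hence $A(0)=1>0$. This single evaluation already contradicts the prediction $A(0)<0$ in Conjecture \ref{conj 1}, which is the main content of the theorem.

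\emph{Step 2 (setup for $A(10)$ and $A(15)$).} To keep the bookkeeping short we invoke Ramanujan's identity \eqref{R_1^5/R_5}. Inverting both sides gives
\begin{equation*}
\frac{1}{R^5(q)}=\frac{1}{R(q^5)}\cdot\frac{1+3T+4T^2+2T^3+T^4}{1-2T+4T^2-3T^3+T^4},\qquad T:=qR(q^5).
\end{equation*}
Since only the coefficients of $1/R^5(q)$ through $q^{15}$ are needed, it suffices to know $R(q^5)$ modulo $q^{20}$, equivalently $R(q)$ modulo $q^{4}$. A direct expansion of the Rogers-Ramanujan product (or of $H(q)/G(q)$) gives $R(q)=1-q+q^2+O(q^4)$, so that $R(q^5)=1-q^5+q^{10}+O(q^{20})$, $1/R(q^5)=1+q^5-q^{15}+O(q^{20})$, and $T=q-q^6+q^{11}+O(q^{16})$.

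\emph{Step 3 (assembly).} One next computes $T^2,T^3,T^4$ modulo $q^{16}$, forms the numerator and denominator polynomials above, performs the formal power series division modulo $q^{16}$, and finally multiplies the quotient by $1/R(q^5)$ modulo $q^{20}$. Reading off the coefficients of $q^{10}$ and $q^{15}$ in the resulting expansion yields explicit negative integers, thereby establishing $A(10)<0$ and $A(15)<0$.

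\emph{Principal obstacle.} Nothing conceptual is required: the entire argument is formal power series arithmetic. The only care needed is that the truncation orders are chosen consistently so that no relevant term is dropped when forming products and quotients. The computation is short enough to be done by hand and is easily cross-checked with any symbolic algebra system.
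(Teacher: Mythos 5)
Your Step 1 is fine: $R(0)=1$ gives $A(0)=1>0$ immediately, and this is all the paper does for that part (there it appears as $A(0)=c(0)C(0)$). For $A(10)$ and $A(15)$, however, you take a genuinely different route from the paper. The paper does not expand $1/R^5(q)$ at all: it quotes Baruah--Sarma's $5$-dissection to get $\sum_{n\ge0}A(5n)q^n=\frac{1}{R(q)}\bigl(1-25q\,f_5^6/f_1^6\bigr)=\bigl(\sum_n c(n)q^n\bigr)\bigl(\sum_n C(5n)q^n\bigr)$, and then reads off the signs of $A(10)$ and $A(15)$ from the already-established sign patterns of $c(n)$ (together with $c(2)=0$) and of $C(5n)$ from Theorem \ref{periodbeta}, with no numerical work. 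Your proposal instead inverts Ramanujan's identity \eqref{R_1^5/R_5} and computes the Taylor expansion of $1/R^5(q)$ directly through $q^{15}$. That is more elementary and self-contained (one could even dispense with \eqref{R_1^5/R_5} and expand the fifth power of the product in \eqref{eq2} directly), and your truncation bookkeeping is consistent: $R(q)=1-q+q^2+0\cdot q^3+O(q^4)$ is correct (note you are implicitly using $d(3)=0$ to write $R(q^5)=1-q^5+q^{10}+O(q^{20})$), and knowing $R(q^5)$ modulo $q^{20}$ indeed suffices for the coefficient of $q^{15}$.

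The genuine defect is that Step 3 is never executed. The entire content of the claims $A(10)<0$ and $A(15)<0$ is the sign of two specific integers, and your proof ends with the assertion that the arithmetic ``yields explicit negative integers'' without exhibiting them; as written, nothing has been proved about $A(10)$ or $A(15)$. You must either display the resulting coefficients (the computation does succeed: for instance $A(10)=-175$ and $A(15)=-826$, which one can check quickly from the dissection formula above since $25q\,f_5^6/f_1^6=25q+150q^2+675q^3+\cdots$ and $1/R(q)=1+q+0\cdot q^2-q^3+\cdots$), or carry your power-series division far enough that a reader can verify the signs. Once those two integers are actually produced, your argument is complete and correct.
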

\begin{remark}
	Conjecture \ref{conj 1} fails for $A(5n)$ at $n=0$.
\end{remark}
\begin{theorem}\label{gj2}
	\begin{align*}
	B(0)>0,	B(5) <0.
	\end{align*}

\end{theorem}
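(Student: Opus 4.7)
The plan is to verify both inequalities by explicitly computing the first six Taylor coefficients of $R^5(q)$.

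First, we expand $R(q)$ modulo $q^6$. Each infinite product $(q^j;q^5)_\infty$ appearing in \eqref{R-GH} reduces modulo $q^6$ to its first binomial factor $1-q^j$, since every subsequent factor $1-q^{j+5k}$ with $k\ge 1$ is congruent to $1$ modulo $q^6$. A short calculation then gives
$$
R(q) \;\equiv\; \frac{(1-q)(1-q^4)}{(1-q^2)(1-q^3)} \;\equiv\; 1 - q + q^2 - q^4 + q^5 \pmod{q^6},
$$
so in particular $R(q^5) \equiv 1 - q^5 \pmod{q^{10}}$.

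Second, we compute $R^5(q)$ modulo $q^6$. One route is to square the expansion of $R(q)$ twice and multiply by $R$, forming successively $R^2$, $R^4$, and then $R^5 = R^4\cdot R$. A more elegant route is to substitute the expansion of $R(q^5)$ into Ramanujan's identity \eqref{R_1^5/R_5}: since each power $R(q^5)^k$ equals $1$ modulo $q^5$, the rational factor on the right-hand side reduces modulo $q^5$ to $(1-2q+4q^2-3q^3+q^4)/(1+3q+4q^2+2q^3+q^4)$, which one inverts by formal long division and then corrects at order $q^5$ using the $-q^5$ coming from the outer factor $R(q^5)$. Either way one obtains
$$
R^5(q) \;=\; 1 - 5q + 15q^2 - 30q^3 + 40q^4 - 26q^5 + O(q^6).
$$
Reading off the relevant coefficients gives $B(0)=1>0$ and $B(5)=-26<0$. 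The first inequality contradicts Conjecture \ref{conj 1} at $n=0$, while the second is consistent with that conjecture at $n=1$.

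No genuine obstacle is expected: the proof is a finite, fully verifiable power series computation. The only point that demands care is carrying every expansion to sufficient order. In particular, when applying \eqref{R_1^5/R_5}, the $q^5$-term of $R(q^5)$, namely $d(1)q^5=-q^5$, must be retained; dropping it would alter $B(5)$ by one and leave the intermediate arithmetic inconsistent. All the remaining bookkeeping is routine.
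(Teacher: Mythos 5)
Your computation is correct: truncating each factor $(q^j;q^5)_\infty$ of \eqref{R-GH} at its first binomial is legitimate modulo $q^6$, the expansion $R(q)\equiv 1-q+q^2-q^4+q^5 \pmod{q^6}$ is right (note $d(3)=0$, one of the known exceptional vanishing coefficients), and raising to the fifth power does give $R^5(q)=1-5q+15q^2-30q^3+40q^4-26q^5+O(q^6)$, hence $B(0)=1>0$ and $B(5)=-26<0$. Your remark about the order-$q^5$ correction in the route via \eqref{R_1^5/R_5} is also accurate: the inner terms $q^kR^k(q^5)$ only contribute corrections at order $q^6$ and beyond, so the sole $q^5$ correction is the $-q^5$ from the outer factor $R(q^5)$, shifting $-25$ to $-26$. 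However, your route is genuinely different from the paper's. The paper does not compute $R^5(q)$ directly; it starts from the $5$-dissection formula of Baruah and Sarma for $\sum B(n)q^n$, extracts the $q^{5n}$ terms using the classical identity $1/R^5(q)-q^2R^5(q)=11q+f_1^6/f_5^6$, and obtains the closed form $\sum_{n\ge 0}B(5n)q^n=R(q)\bigl(1-25q\,f_5^6/f_1^6\bigr)$, from which $B(0)=d(0)C(0)$ and $B(5)=d(0)C(5)+d(1)C(0)=-25-1=-26$ follow from known sign information on $d(n)$ and $C(5n)$. Your approach is more elementary and self-contained (a finite, verifiable truncated power-series computation needing only \eqref{R-GH}), which fully suffices for the two coefficients claimed; the paper's approach costs more machinery but produces the generating function of the whole subsequence $B(5n)$, which is the natural object if one wants to attack the sign of $B(5n)$ for all $n$ rather than just the counterexample at $n=0$.
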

\begin{remark}
	Conjecture \ref{conj 1} fails for $B(5n)$ at $n=0$.
\end{remark}
\begin{theorem}\label{gj3}
	\begin{align*}
D(1)>0.	
\end{align*}
	
\end{theorem}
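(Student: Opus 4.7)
The plan is to extract $D(1)$ directly from Ramanujan's identity \eqref{R_1^5/R_5}. First, I would rearrange that identity so that the left-hand side is $R(q^5)/R^5(q)$ rather than $R^5(q)$: dividing by $R(q^5)$ and inverting gives
\begin{equation*}
\frac{R(q^5)}{R^5(q)} = \frac{1 + 3qR(q^5) + 4q^2 R^2(q^5) + 2q^3 R^3(q^5) + q^4 R^4(q^5)}{1 - 2qR(q^5) + 4q^2 R^2(q^5) - 3q^3 R^3(q^5) + q^4 R^4(q^5)}.
\end{equation*}
Because $R(q^5) \equiv 1 \pmod{q^5}$, to determine $D(0)$ and $D(1)$ it suffices to work modulo $q^2$. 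Under this truncation the numerator collapses to $1 + 3q$ and the denominator to $1 - 2q$, so the right-hand side reduces to $(1 + 3q)(1 - 2q)^{-1} \equiv 1 + 5q \pmod{q^2}$. Reading off the coefficient of $q$ gives $D(1) = 5$, which is positive, as required; this also reproduces the known value $D(0) = 1$.

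The argument is essentially a one-line power-series manipulation once Ramanujan's identity is available, so no substantial obstacle is expected. The only care needed is the algebraic rearrangement of \eqref{R_1^5/R_5} into a form whose numerator and denominator are visibly $1 + O(q)$, so that formal inversion modulo $q^2$ is legitimate. As a sanity check, one can cross-verify by expanding directly from the product form \eqref{R-GH}: modulo $q^2$ one has $R(q) \equiv 1 - q$, hence $R^5(q) \equiv 1 - 5q$ and $R(q^5)/R^5(q) \equiv (1-5q)^{-1} \equiv 1 + 5q$, in agreement with the value $D(1) = 5$ obtained above.
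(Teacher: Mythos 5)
Your proof is correct, and it takes a genuinely different route from the paper. The paper proves $D(1)>0$ by first establishing the full generating function $\sum_{n\ge 0} D(5n+1)q^n = \frac{f_5^6}{f_1^6}R(q)\bigl(\frac{5}{R^5(q)}-40q\bigr)$ via the $5$-dissection formula of Baruah--Sarma and the classical identity $\frac{1}{R^5(q)}-q^2R^5(q)=11q+\frac{f_1^6}{f_5^6}$, then factoring it as $\bigl(\sum d(n)q^n\bigr)\cdot\bigl(\sum A(5n+1)q^n\bigr)$ and reading off $D(1)=d(0)A(1)>0$. You instead compute the single coefficient directly: rearranging Ramanujan's modular equation to express $R(q^5)/R^5(q)$ as a ratio whose numerator and denominator are $1+3q+O(q^2)$ and $1-2q+O(q^2)$ (legitimate since $R(q^5)\equiv 1 \pmod{q^5}$), you get $D(1)=5$; your sanity check observes that even Ramanujan's identity is overkill, since $R(q)\equiv 1-q \pmod{q^2}$ from the product form already gives $R(q^5)/R^5(q)\equiv 1+5q$. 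Your computation is verifiably right (and agrees with the paper's implicit value $D(1)=d(0)A(1)=1\cdot 5=5$). The trade-off: your argument is shorter, self-contained, and needs none of the dissection machinery, but it yields only the one numerical value; the paper's heavier route produces the generating function for all of $D(5n+1)$, which is the natural starting point if one wanted to prove the conjectured positivity $D(5n+1)>0$ for every $n$ rather than just $n=0$.
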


\section{ Proof of the main results }

\begin{proof}[Proof of  Theorem \ref{gj1}]
	We know from \cite[Equation (62)]{baruah-sarma} the following: 
	\begin{align*}
	\sum_{n = 0}^{\infty} A(n) q^{n} = \frac{f^{6}_{25}}{f^{6}_{5}\cdot R^{5}(q^5)}&\biggl(1 + 3qR(q^5)+4q^2 R^{2}(q^5) + 2q^{3}R^{3}(q^{5}) + q^{4}R^{4}(q^5)\biggl)^{2}\\
	&\times \biggl(\frac{1}{R(q^5)} - q - q^{2}R(q^5)\biggl).
		\end{align*}
	
	Now by extracting the terms of the form $q^{5n}$ from the above equation and by replacing $q^{5}$ by $q$ and by applying the result
	\begin{align}\label{B20}
	\frac{1}{R^5(q)}-q^2R^5(q)&=11q+\frac{f_1^6}{f_5^6}
	\end{align}  from \cite[Theorem 7.4.4]{Spirit}, we get following 
	\begin{align}\label{eq3} \notag
	\sum_{n = 0}^{\infty} A(5n) q^{n}& = \frac{1}{R(q)}(1 - 25q \frac{f^{6}_{5}}{f^{6}_{1}})\\ 
	& =\sum_{n=0}^{\infty}c(n)q^{n} . \sum_{n=0}^{\infty}C(5n)q^{n}
	\end{align}
	The last equality holds because of \eqref{eq2} and the fact that $ \displaystyle\sum_{n=0}^{\infty}C(5n)q^{n}=1 - 25q \frac{f^{6}_{5}}{f^{6}_{1}}$.
	Now equating the constant term of the both sides, we get $$ A(0)=c(0)C(0).$$
	Since, $c(0)>0,$ and $C(0)=1$, we obtain $A(0)>0.$ Now equating the coefficients of $q^2$  from \eqref{eq3}, we get 
	\begin{align*}
	A(10)= c(0)C(10)+c(1)C(5)+c(2)C(0)
	\end{align*}
Therefore, $A(10)<0$ because of $c(0), c(1)$ are positive, $C(10), C(5)$ are negative and $c(2)=0.$
Similarly,  equating the coefficients of $q^3$  from \eqref{eq3}, we get  
\begin{align*}
A(15)= c(0)C(15)+c(1)C(10)+c(2)C(5)+c(3)C(0)
\end{align*}
Clearly, $A(15)<0$.
\end{proof}

\begin{proof}[Proof of  Theorem \ref{gj2}]
	In order to prove the Theorem \ref{gj2}, we need to first use  \cite[Equation (69)]{baruah-sarma} to arrive at the following: 
	\begin{align*} 
	\sum_{n = 0}^{\infty} B(n) q^{n} = \frac{f^{6}_{25}}{f^{6}_{5}\cdot R^{3}(q^5)}&\biggl(1 - 2qR(q^5)+4q^2 R^{2}(q^5) - 3q^{3}R^{3}(q^{5}) + q^{4}R^{4}(q^5)\biggl)^{2}\\
	&\times \biggl(\frac{1}{R(q^5)} - q - q^{2}R(q^5)\biggl).
	\end{align*}
	Now by extracting the terms of the form $q^{5n}$ from the above equation and by replacing $q^{5}$ by $q$ and by applying \eqref{B20}, we get following:
	\begin{align}\label{eq4}
\notag	\sum_{n = 0}^{\infty} B(5n) q^{n} &= R(q)(1 - 25q \frac{f^{6}_{5}}{f^{6}_{1}})\\
	& =\sum_{n=0}^{\infty}d(n)q^{n} . \sum_{n=0}^{\infty}C(5n)q^{n}
	\end{align}
	The last equality holds because of \eqref{eq1}. 	Now, equating the constant term on both sides, we get 
	$$ B(0)=d(0)C(0).$$
		Since, $d(0)>0,$ and $C(0)=1$, we obtain $B(0)>0.$
        	Comparing the coefficients of $q$ from both sides, we get 
		\begin{align*}
	B(5)= d(0)C(5)+d(1)C(0).
		\end{align*}
		We complete the proof because of $d(0)>0, C(5)<0, d(1)<0, C(0)=1$.
\end{proof}

\begin{proof}[Proof of  Theorem \ref{gj3}]
	For proving the  Theorem \ref{gj3},  we will use equation\cite[Equation (79)]{baruah-sarma} 
	\begin{align*} 
	\sum_{n = 0}^{\infty} D(n) q^{n} = \frac{f^{6}_{25}}{f^{6}_{5}\cdot R^{4}(q^5)} &\biggl(1 + 3qR(q^5) + 4q^2 R^{2}(q^5) + 2q^{3}R^{3}(q^{5}) + q^{4}R^{4}(q^5)\biggl)^{2}\\
	&\times \biggl(\frac{1}{R(q^5)} - q - q^{2}R(q^5)\biggl).
	\end{align*}
	Now by extracting the terms of the form $q^{5n + 1}$, and then dividing the expression by $q$, followed by replacing $q^5$ with, $q$ we arrive at the following equation:
	\begin{align}\label{eq8}
\notag 	\sum_{n = 0}^{\infty} D(5n + 1) q^{n} & = \frac{f^{6}_{5}}{f^{6}_{1}} R(q) \biggl( \frac{5}{R^{5}(q)} - 40q\biggl).\\
		& =\sum_{n=0}^{\infty}d(n)q^{n} . \sum_{n=0}^{\infty}A(5n+1)q^{n}
	\end{align}
 	Now, equating the constant term on both sides, we get 
 $$ D(1)=d(0)A(1).$$
 Since, $d(0)>0,$ and $A(1)>0$, we obtain $D(1)>0.$

\end{proof}

\section{Acknowledgement} 
The authors thank Prof. Nayandeep Deka Baruah and Abhishek Sarma for bringing to our attention a flaw in the initial draft of this paper. Suparno Ghoshal was supported by the Deutsche Forschungsgemeinschaft (DFG, German Research Foundation) under Germany's Excellence Strategy – EXC 2092 CASA – 390781972.

\end{document}